\documentclass[12pt,leqno,a4paper]{article}

\usepackage[english]{babel}
\usepackage{amsmath}
\usepackage{amsfonts}
\usepackage{amsthm}
\usepackage{amssymb}

\usepackage{xspace}
\usepackage{euscript}
\usepackage{graphicx}
\usepackage{amscd}
\usepackage{tabularx}

\usepackage{enumerate}
 \usepackage{epsfig} 
 \usepackage{graphics} 
%
%
%

%
\theoremstyle{plain}
\newtheorem{theo}{Theorem}[section]
\newtheorem{prop}[theo]{Proposition}
%

%
%
\theoremstyle{definition}
%
%
\theoremstyle{remark}
\newtheorem{rem}[theo]{Remark}
%
%

%
%
\numberwithin{equation}{section}

%
%
%
%
%
%

\newcommand{\R}{\mathbb{R}}

%
%
%
%
%
\title{A straightforward proof of Carleman estimate for second order elliptic operator and a three sphere inequality}

\author{Lorenzo Baldassari\thanks{Department of Mathematics, ETH Zurich, Switzerland, E-mail:
\textsf{lorenzo.baldassari@sam.math.ethz.ch}},\quad\mbox{and}\quad
Sergio Vessella\thanks{Dipartimento di Matematica e Informatica, Universit\`a degli Studi di Firenze, Italy, E-mail:
\textsf{sergio.vessella@unifi.it}}}
\date{}

\begin{document}

\setcounter{section}{0}
\setcounter{secnumdepth}{2}

\maketitle

\begin{abstract}
In this paper we provide a simple proof of a Carleman estimate for a second order elliptic operator $P$ with Lipschitz leading coefficients. We apply such a Carleman estimate to derive a three sphere inequality for solutions to equation $Pu=0$.
\medskip

\noindent \textbf{Keywords:} Carleman estimate, Elliptic Equations, Propagation of smallness

\medskip

\noindent\textbf{Mathematics Subject Classification (2010)}: 35R25, 35J15, 35B45.

\end{abstract}

\section{Introduction} \label{sec:intro}

The main purpose of the present paper consists in providing a straightforward proof of Carleman estimates for second order elliptic operators with real coefficients in the principal part. Carleman estimates are among the most useful tools for studying  the unique continuation property or for proving uniqueness and stability for Cauchy problems for partial differential equations.

We recall that a linear partial differential equation $Pu=0$, enjoys a unique continuation property (UCP) in $\Omega\subset\mathbb{R}^{n}$, where $\Omega$ is a connected open set of $\mathbb{R}^{n}$, if the following property holds true \cite{hTat}:
for any open subset $A$ of $\Omega$
\begin{equation}\label{intr-1}
Pu=0\mbox{ in }\Omega \mbox{ and }u=0\mbox{ in }A\quad
\mbox{ imply }\quad u=0\mbox{ in }\Omega.
\end{equation}

 Furthermore, we call quantitative estimate of unique continuation (QEUC) or stability estimate related to the UCP property \eqref{intr-1} the
following type of result:
\begin{equation}\label{intr-1-1}
Pu=0\mbox{ in }\Omega\mbox{, } u \mbox{ small in } A \mbox{ and } \left\Vert u\right\Vert\leq 1 \mbox{ imply } u \mbox{ small in }\widetilde{\Omega},
\end{equation}
where $\left\Vert\cdot\right\Vert$ is a norm of a suitable space of functions defined on $\Omega$ and $\widetilde{\Omega}\Subset\Omega$. A classical example of an equation that enjoys the UCP is the Laplace equation $\Delta u=0$. In this case, property \eqref{intr-1} is an immediate consequence of the analyticity of solution $u$. In addition, if $\left\Vert\cdot\right\Vert=\left\Vert\cdot\right\Vert_{L^2(\Omega)}$, property  \eqref{intr-1-1} is also true, \cite{A-R-R-V}. It is evident that the main interest of the research in the topic of UCP concerns those equations whose solutions may be nonanalytic. For instance, this is the case of the equation $\Delta u+q(x)u=0$, where $q$ is a bounded function. For such equation, in two dimension Carleman \cite{ca} derived the estimates that are known as Carleman Estimates. Since Carleman's paper a vast literature has been developed on that and on its possible applications. Here we limit ourselves to refer to \cite{hormanderbook1} (see also \cite[Chapter 28]{ho3}) for general theory and to \cite{KlTi} and \cite[Chapter 3]{isakovlib2} for applications to inverse problems.

In this paper we provide, for a second order elliptic operator $P$, a simple proof of the Carleman estimate \cite[Theorem 8.3.1]{hormanderbook1}, that reads as follows:
there exist $C>0$ such that for every $u \in C^{\infty}_0(\Omega)$ and every $\tau\geq C$ we have
\begin{equation}\label{carleman-intr}
\tau \int_{\Omega} (|\nabla u|^2 + \tau^2 u^2)e^{2 \tau \phi} \leq C\int_{\Omega} |Pu|^2 e^{2 \tau \phi},
\end{equation}
where $\phi\in C^2(\overline{\Omega})$, is suitable weight function such that $\nabla \phi(x)\neq 0$, for every $x\in\overline{\Omega}$, and
\begin{equation*}
Pu=\partial_i(g^{ij} \partial_i u),
\end{equation*}
(we use the convention on repeated indices) where $\{g^{ij}(x)\}_{i,j=1}^n$ is a real-valued symmetric  $n\times n$ matrix that satisfies a hypothesis of uniform ellipticity whose entries are Lipschitz continuous (see Theorem \ref{main theo} for a precise statement).

Our proof is based on integration by parts and, in particular, on Rellich identity (see \eqref{rellich} below). Let us point out the that the application of Rellich identity in the context of Carleman estimates has been already employed in \cite{Ga}, \cite{GaSh}, see also (for the parabolic context) \cite{EsVe}. In order to illustrate our approach consider, for the sake of brevity, the simplest case, where $P=\Delta$, and let $v=e^{\tau \phi} u$. Very roughly speaking the main efforts of the proof consists in estimating from below the right hand side of \eqref{carleman-intr} by a definite positive quadratic form of the following variables:

i) $\tau|\nabla \phi| v$,

ii) the normal derivative to level surface $\{\phi=const\}$, that is the derivative of $v$ with respect to the direction of $\nabla \phi$,

iii) the tangential component of $\nabla v$ with respect to $\{\phi=const\}$.

\noindent To do this, we carry out a careful analysis of the strong pseudoconvexity conditions satisfied by weight function $\phi$ (see \eqref{ASSUMPTION} below) introduced in \cite{hormanderbook1}, see also \cite[Chapter 3]{isakovlib2}.

We refer the interested reader to \cite{EsVe} and \cite[Section 4]{MRV1} as papers strictly related to this note. Also we mention the nice notes \cite{Sa} in which, however, the Carleman estimate (for $P=\Delta$) is proved for the special type of weight functions $\phi=e^{\mu \psi}$, where $\nabla \psi\neq 0$ in $\overline{\Omega}$ and $\mu$ is large enough. Such a result can be obtained by combining Theorem \ref{main theo} and Proposition \ref{proposition} of the present paper.

We prove a three sphere inequality (see Theorem \ref{trsf} for a precise statement) for solutions to second order elliptic equation
\begin{equation}\label{eq-ell}
\partial_i(g^{ij} \partial_i u)=b_j\partial_j u+au,
\end{equation}
where $a\in L^{\infty}$ and $b_j\in L^{\infty}$, $j=1\cdots n$.
The proof of such three sphere inequality is quite standard and we give it mostly for the reader convenience and for completeness. As a matter of fact the three sphere inequality furnishes the "building brick" to the QUEC for solution to \eqref{eq-ell}, see \cite{A-R-R-V}.

The plan of the paper is as follows. In Section \ref{sec:notation} we collect some notations. In Section \ref{sec:theorem} we give the statement of the main theorem and we examine the main assumption. In Section \ref{sec:proof}  we provide the proof of the main theorem. In Section \ref{sec:tre sfere} we prove a three sphere inequality.

\section{Notation} \label{sec:notation}
Let $n\in\mathbb{N}$, $n\geq 2$. For any $\xi, \eta \in \R^n$, we define the following standard inner product $\langle \xi, \eta \rangle_n = \xi_i \eta_i$. Consequently we define $|\xi|_n=\sqrt{\langle \xi, \xi \rangle_n}$. We denote, as usual, by $|a|$ the absolute value of any real number $a$. Let $g(x)=\{g_{ij}(x)\}_{i,j=1}^n$, be a real-valued symmetric  $n\times n$ matrix  that for given constants $\lambda\geq 1$  and $\Lambda>0$ satisfies the following conditions (we use the convention on repeated indices)
\begin{equation}\label{elliptic}
\lambda^{-1} |\xi|^2_n \leq g_{ij}(x) \xi_i \xi_j \leq \lambda |\xi|^2_n \mbox{, for every } x,\xi\in \mathbb{R}^n
\end{equation}
and
\begin{equation}\label{lipschitz}
\sum_{j,i=1}^n | g_{ij}(x) - g_{ij}(y) | \leq \Lambda |x-y|_n \mbox{, for every } x,y\in \mathbb{R}^n.
\end{equation}
For any $\xi \in \R^n$, we define the vector $\xi^{(g)}=(g^{ij}(x)\xi_j)_{i=1}^n$, where $\{g^{ij}(x)\}_{i,j=1}^n$ is the inverse matrix of $g(x)$. We will use the following notations when considering either the vector fields $\xi$ and $\eta$ or the functions $v$ and $f$:
\begin{equation*}
\langle \xi, \eta \rangle = g_{ij}(x) \xi_i \eta_j \mbox{ , }\quad\quad |\xi|^2=\langle \xi, \xi \rangle,
\end{equation*}
\begin{equation*}
\nabla_g f(x) = g^{-1}(x) \nabla f (x) \mbox{ , }\quad\quad \Delta_g v= \mbox{div}(\nabla_g v) = \partial_i(g^{ij} \partial_i v).
\end{equation*}
Also, we denote by $(\nabla_{g} v(x))_k$ the k-component of the vector $\nabla_g v(x)$. Notice that
\begin{equation*}
\langle \xi^{(g)}, \eta^{(g)} \rangle = g^{ij}(x) \xi_i \eta_j = \langle \xi^{(g)}, \eta \rangle_n = \langle \xi, \eta^{(g)} \rangle_n,
\end{equation*}
\begin{equation*}
\langle \nabla_g v(x), \nabla_g f(x) \rangle = g^{ij}(x) \partial_i v (x) \partial_j f (x).
\end{equation*}
 We denote by $P(x, \partial)=g^{ij}(x) \partial^2_{ij} v (x)$ the principal part of the operator $\Delta_g$, consequently we denote
 $$P(x,\xi)=g^{ij}(x)\xi_i\xi_j\mbox{,  }\quad \xi\in \mathbb{R}^n.$$

\section{The Main Theorem} \label{sec:theorem}

In order to state Theorem \ref{main theo} below let us introduce some additional notations. Let $\Omega\subset \mathbb{R}^n$ be a bounded open set and let $\phi\in C^{2}(\bar{\Omega})$ be a real function such that
\begin{equation*}
\nabla \phi(x) \neq 0\mbox{, for every } x \in \bar{\Omega}.
\end{equation*}
Denote by
\begin{equation}\label{bound-phi}
m=\min_{\bar{\Omega}}|\nabla \phi| \quad\mbox{ and }\quad  M=\left\Vert\phi\right\Vert_{C^{2}(\bar{\Omega})}.
\end{equation}

In what follows we use the following identity (it is an immediate consequence of $\delta_h^i=g^{ij}g_{jh}$)
\begin{equation}
\partial_l g^{ik}=-g^{ij} (\partial_l g_{jh}) g^{hk}.
\label{identit�}
\end{equation}

For every $\xi\in\R^n$ and $\tau \neq 0$ we denote
\begin{equation*}
Q (x, \xi,\tau):=\partial^2_{jk} \phi(x)P^{(j)}(x, \zeta) \overline{P^{(k)}(x, \zeta)} + \tau^{-1} \mbox{ Im} \left(P_{(k)}(x, \zeta) \overline{P^{(k)} (x, \zeta)}\right),
\end{equation*}
where $\zeta=\xi+i \tau \nabla \phi(x)$,
\begin{equation*}
P^{(l)}(x, \xi) := \frac{\partial P}{\partial \xi_l} (x, \xi) = 2 g^{lk}(x)\xi_k, \quad \mbox{ for } l=1,\cdots, n
\end{equation*}
and
\begin{equation*}
P_{(s)}(x, \xi) := \frac{\partial P}{\partial x_s} (x, \xi) = \partial_s g^{jk}(x) \xi_j \xi_k, \quad \mbox{ for } s=1,\cdots, n.
\end{equation*}
Observe that by \eqref{identit�} we have
\begin{equation*}
P_{(s)}(x, \xi)= -\partial_s g_{kh}(x) \xi^{(g),k} \xi^{(g),h}, \quad \mbox{ for } s=1,\cdots, n,
\end{equation*}
where $\xi^{(g),k}$, $k=1,\cdots, n$, is the $k$-component of $\xi^{(g)}$.
Hence
\begin{equation}
\begin{aligned}
Q(x, \xi,\tau) =    \; & 4 [\partial^2_{jk} \phi \xi^{(g),j} \xi^{(g),k} + \tau^2 \partial^2_{jk} \phi (\nabla_g \phi)_j (\nabla_g \phi)_k ] - 4 (\partial_s g_{kh}) (\nabla_{g} \phi)_k \xi^{(g),h} \xi^{(g),s} \\ & +2(\partial_s g_{tw}) \xi^{(g),t} \xi^{(g),w} (\nabla_g \phi)_s -  2 \tau^2 (\partial_s g_{kh}) (\nabla_g \phi)_k (\nabla_g \phi)_h (\nabla_g \phi)_s.
\end{aligned}
\label{nuovadefQ}
\end{equation}
Notice that $Q(x, \xi,\tau)$ can be continuously defined also for $\tau=0$, hence from now on we consider $Q(x, \xi,\tau)$ defined for all $\tau\in \mathbb{R}$.

\bigskip

\noindent \textbf{Main Assumption}. In what follows we assume that there exists a constant $c_0>0$ such that
\begin{equation}
\label{ASSUMPTION}
\begin{aligned}
\begin{cases}
P(x, \zeta) = 0 \\
\zeta=\xi+i \tau \nabla \phi(x) \neq 0
\end{cases} \Longrightarrow   Q (x, \xi,\tau) \geq c_0|\zeta|^2.
\end{aligned}
\end{equation}
\begin{theo} \label{main theo}
If (\ref{ASSUMPTION}) holds true then there exist $C>0$ and $\tau_0>0$ such that for every $u \in C^{\infty}_0(\Omega)$ and every $\tau\geq \tau_0$
\begin{equation}\label{carleman}
\tau \int_{\Omega} (|\nabla_g u|^2 + \tau^2 u^2)e^{2 \tau \phi} \leq C\int_{\Omega} |\Delta_g u|^2 e^{2 \tau \phi}.
\end{equation}
\end{theo}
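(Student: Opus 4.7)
The plan is to follow the classical conjugation-and-split strategy and to pay off the resulting cross term by an integration by parts of Rellich type with multiplier $\nabla_g\phi$. First, I would substitute $v=e^{\tau\phi}u$ and use the symmetry of $g^{ij}$ to compute
\[
e^{\tau\phi}\Delta_g u \;=\; P_+ v + P_- v,
\]
where
\[
P_+ v := \Delta_g v + \tau^2|\nabla_g\phi|^2\, v, \qquad P_- v := -2\tau\langle\nabla_g\phi,\nabla v\rangle_n - \tau\,v\,\Delta_g\phi.
\]
A one-line integration by parts shows that $P_+$ is formally self-adjoint and $P_-$ formally skew-adjoint, so expanding the square yields
\[
\int_\Omega e^{2\tau\phi}|\Delta_g u|^2 \;=\; \|P_+ v\|_{L^2}^2 + \|P_- v\|_{L^2}^2 + ([P_+,P_-]v,v)_{L^2}.
\]

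The Carleman estimate will follow from a quantitative lower bound on the commutator term. I would compute $([P_+,P_-]v,v)_{L^2}$ by integrating by parts twice, using $\langle\nabla_g\phi,\nabla v\rangle_n$ as a multiplier (this is the Rellich step), and then collect all contributions of $\partial^2_{jk}\phi$ and $\partial_s g^{ij}$. Careful bookkeeping should reveal that the integrand equals precisely the symbol $Q(x,\nabla v,\tau)$ of \eqref{nuovadefQ} with $\xi$ replaced by $\nabla v$, modulo remainders that are linear in the Lipschitz data $\partial g^{ij}$ and carry strictly fewer powers of $\tau$; these remainders can be absorbed for $\tau$ large by virtue of \eqref{elliptic} and \eqref{lipschitz}. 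Crucially, no second derivatives of $g^{ij}$ appear.

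The main obstacle is that \eqref{ASSUMPTION} supplies positivity of $Q$ only on the characteristic cone $\{P(x,\zeta)=0\}$, whereas one needs a coercive pointwise bound valid for \emph{every} real $\nabla v$. I would bridge the gap by decomposing $\nabla v = \alpha\,\nabla\phi + T$ with $\langle T,\nabla_g\phi\rangle_n = 0$. Then $T$ automatically kills $\mathrm{Im}\,P(x,T+i\tau\nabla\phi)$, so when additionally $P(x,T) = \tau^2|\nabla_g\phi|^2$ the pair $(T,\tau\nabla\phi)$ lies on the characteristic cone and \eqref{ASSUMPTION} applies directly. The two scalar defects from this ideal configuration---the normal coefficient $\alpha$ and the radial mismatch $P(x,\nabla v) - \tau^2|\nabla_g\phi|^2$---are, up to explicit factors, exactly the principal symbols of $P_-$ and $P_+$. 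An algebraic completion of squares therefore upgrades \eqref{ASSUMPTION} to a pointwise estimate of the form
\[
Q(x,\nabla v,\tau) + A\bigl(P(x,\nabla v)-\tau^2|\nabla_g\phi|^2\bigr)^2 + B\,\tau^2\langle\nabla_g\phi,\nabla v\rangle_n^2 \;\geq\; c\bigl(|\nabla v|^2+\tau^2\bigr),
\]
and the two correction terms are paid for by small fixed fractions of $\|P_+v\|_{L^2}^2$ and $\|P_-v\|_{L^2}^2$ retained from the square expansion.

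Combining the three contributions, and recalling the overall factor of $\tau$ hidden in the definition of $Q$, one reaches $\int e^{2\tau\phi}|\Delta_g u|^2 \geq C\tau\int(|\nabla v|^2+\tau^2 v^2)$ for $\tau$ large. Reverting to $u$ via $\nabla u = e^{-\tau\phi}(\nabla v-\tau v\,\nabla\phi)$, which yields $e^{2\tau\phi}(|\nabla_g u|^2+\tau^2 u^2)\leq C(|\nabla v|^2+\tau^2 v^2)$, gives \eqref{carleman}. The technical heart of the argument is the third paragraph---extending the characteristic-set condition \eqref{ASSUMPTION} to a genuine pointwise coercive estimate while keeping all error terms strictly lower order in $\tau$, so that they can be absorbed without requiring more than Lipschitz regularity of $g^{ij}$.
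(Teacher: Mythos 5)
Your overall architecture — conjugation, symmetric/skew splitting, computing the cross term by a Rellich-type integration by parts with multiplier $\nabla_g\phi$, and a tangential/normal decomposition of $\nabla v$ — is exactly the one the paper uses. However, there is a genuine gap at the decisive step. You assert that the commutator term equals $\tau Q(x,\nabla v,\tau)$ "modulo remainders that... carry strictly fewer powers of $\tau$" and so "can be absorbed for $\tau$ large." This is not so. Carrying out the Rellich integration by parts (cf.\ \eqref{SA}--\eqref{doppio prodotto}) produces, in addition to $\tau Q(x,\nabla v,\tau v)$ and the positive $4\tau^2\langle\nabla_g\phi,\nabla_g v\rangle^2$, the terms
\[
-2\tau\,\Delta_g\phi\,|\nabla_g v|^2 \quad\mbox{and}\quad 2\tau^3\,\Delta_g\phi\,|\nabla_g\phi|^2 v^2,
\]
which have exactly the same $\tau$-homogeneity as the two constituents of $\tau Q$ and whose sign is dictated by $\Delta_g\phi$, which is uncontrolled. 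No amount of "$\tau$ large" removes them, and a naive pointwise completion of squares ignoring them is false.

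Moreover, your proposal to pay for the "radial mismatch" $P(x,\nabla v)-\tau^2|\nabla_g\phi|^2$ by a small fraction of $\|P_+v\|^2_{L^2}$ conflates the operator $P_+v=\Delta_gv+\tau^2|\nabla_g\phi|^2v$ (a second-order differential operator) with the quadratic form $P(x,\nabla v)=|\nabla_g v|^2$; a fraction of $\|P_+v\|^2$ supplies $\int(\Delta_gv+\tau^2|\nabla_g\phi|^2v)^2$, not $\int(|\nabla_g v|^2-\tau^2|\nabla_g\phi|^2)^2$, so the completion of squares you propose does not match the available quantities. The paper closes this gap with an auxiliary Lipschitz scalar $\gamma$: writing $\int|S_\tau v|^2=\int|(S_\tau v-\tau\gamma v)+\tau\gamma v|^2$ and integrating $\gamma v\,\Delta_g v$ by parts yields (see \eqref{stima parte simmetrica}) a \emph{tunable} contribution
\[
-2\tau\int_\Omega\gamma\bigl(|\nabla_g v|^2-\tau^2|\nabla_g\phi|^2v^2\bigr)+(\mbox{lower order in }\tau),
\]
involving only first derivatives of $v$. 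This is combined with the $\Delta_g\phi$ terms of the commutator into a single coefficient $\alpha=\Delta_g\phi+\gamma$ (see \eqref{lower bound}--\eqref{matrix}), and $\alpha$ is then chosen (using $c_0>0$) inside the nonempty open window $-q(x,N_g)/2<\alpha<(c_0-q(x,N_g))/2$ so that the resulting $3\times3$ quadratic form in $(X,Y,Z)=(|\langle\nabla_g v,N_g\rangle|,\,|T_g|,\,\tau|\nabla_g\phi|v)$ is positive definite for $\tau$ large. Without this tunable $\gamma$ (or an equivalent device), your argument does not close; with it, your "third paragraph" becomes precisely the paper's Step 2.
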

\begin{rem}
If $g_{ij} \in C^1(\R^n)$ $\forall i,j\in\{1,...,n\}$, then assumption \eqref{ASSUMPTION} is equivalent to
\begin{equation*}
\begin{aligned}
\begin{cases}
P(x, \zeta) = 0 \\
\zeta=\xi+i \tau \nabla \phi(x) \neq 0
\end{cases} \Longrightarrow   Q (x, \xi,\tau) >0,
\end{aligned}
\end{equation*}
that is the assumption (8.3.2) of \cite[Theorem 8.3.1]{hormanderbook1}.
\end{rem}

\begin{rem}
Since
\begin{equation*}
P(x,\xi+i \tau \nabla \phi) = |\xi^{(g)}|^2 + 2 i \tau \langle \xi^{(g)},\nabla_g \phi \rangle - \tau^2 |\nabla_g \phi |^2
\end{equation*}
and the matrix $g$ is positive definite, we have that the conditions $\nabla \phi (x) \neq 0$ and $\xi + i \tau \nabla \phi (x) \neq 0$ are equivalent, respectively, to
\begin{equation*}
\nabla_g \phi (x)\neq 0, \quad \xi^{(g)} + i \tau \nabla_g \phi (x)\neq 0.
\end{equation*}
Consequently, (\ref{ASSUMPTION}) is equivalent to
\begin{equation}
\begin{aligned} \begin{cases}
\nabla_g \phi \neq 0 \\
|\xi^{(g)}|^2=\tau^2|\nabla_g \phi|^2 \\
\tau \langle \xi^{(g)}, \nabla_g \phi \rangle =0 \\
\xi^{(g)}+i \tau \nabla_g \phi(x) \neq 0
\end{cases}  \Longrightarrow  && Q (x, \xi,\tau) \geq c_0(|\xi^{(g)}|^2 + \tau^2 |\nabla_g \phi|^2 ).
\end{aligned}
\label{new-assump}
\end{equation}
\end{rem}

\bigskip

Before proving Theorem \ref{main theo}, we re-write assumption (\ref{ASSUMPTION}) in a more convenient form.
Let us denote
\begin{equation}
N_g=\frac{\nabla_g \phi}{|\nabla_g \phi|}.
\label{N_g}
\end{equation}
Moreover we define, for $\vartheta\in \mathbb{R}^n$,
\begin{equation}
\begin{aligned}
q(x, \vartheta)  :=    \; 4 \partial^2_{jk} \phi \vartheta_j \vartheta_k - 4( \partial_s g_{kh}) (\nabla_{g} \phi)_k \vartheta_h \vartheta_s +2(\partial_s g_{tw}) (\nabla_g \phi)_s \vartheta_t \vartheta_w .
\end{aligned}
\label{Q1}
\end{equation}
Clearly, $q$ is homogeneous of degree $2$ with respect to the variable $\vartheta$. In addition it is easy to check that
\begin{equation}
\begin{aligned}
Q(x,\xi,\tau) =  q(x,\xi^{(g)})+q(x,\tau \nabla_g \phi)=q(x,\xi^{(g)})+\tau^2 |\nabla_g \phi|^2 q(x,N_g).
\end{aligned}
\label{Q somma Q1}
\end{equation}
and that assumption \eqref{new-assump} can be written as follows
\begin{equation}
\begin{aligned} \begin{cases}
|\xi^{(g)}|^2= \lambda^2\\
\langle \xi^{(g)}, N_g \rangle =0
\end{cases}  \Longrightarrow  q(x, \xi^{(g)})+ \lambda^2 q(x, N_g) \geq c_0 (|\xi^{(g)}|^2 + \lambda^2).
\end{aligned}
\label{third-assump}
\end{equation}
\section{Proof of Main Theorem} \label{sec:proof}
Let $u \in C^{\infty}_0(\Omega)$ and
\[v=e^{\tau \phi} u.\]
Let us define
\begin{gather}\label{defPtau}
P_{\tau} (x, \partial) v = e^{\tau \phi} \Delta_g (e^{-\tau \phi}v) =\\ \nonumber
\Delta_g v - 2\tau \langle \nabla_g \phi, \nabla_g v \rangle + (\tau^2 |\nabla_g \phi|^2 - \tau \Delta_g \phi) v.
\end{gather}
The main effort of the proof consists in proving that there exists $K$ and $\tau_0$, depending on $\lambda,\Lambda,m, M$ and $c_0$ only, such that for every $v\in C^{\infty}_0(\Omega)$ and every $\tau\geq \tau_0$ we have
\begin{equation}
\tau \int_{\Omega} (|\nabla_g v|^2 + \tau^2 |\nabla_g \phi|^2 v^2) \leq K\int_{\Omega} |P_{\tau} v|^2 .
\label{main effort}
\end{equation}
In order to prove this inequality we proceed in the following way.

\begin{description}
\item[First step] We find an estimate from below for $\int_{\Omega} |P_{\tau} v|^2$ by writing $P_\tau v$ as the sum of its symmetric and antisymmetric parts, $S_\tau v$ and $A_\tau v$ respectively. Moreover, we use the following \textbf{Rellich identity}
\vspace{3mm}
\begin{equation}
\begin{aligned}
&2 \langle B, \nabla_g f \rangle \Delta_g f = \mbox{div} [2 \langle B, \nabla_g f \rangle \nabla_g f - B |\nabla_g f|^2 ]\\
&+ \mbox{div}(B)|\nabla_g f|^2- 2 \partial_i B^k g^{ij} \partial_j f \partial_k f + B^k \partial_k g^{ij} \partial_i f \partial_j f.
\end{aligned}
\label{rellich}
\end{equation}
where $B \in C^1(\Omega, \R^n)$ and $f \in C^2(\Omega)$.

\item[Second step] In order to prove \eqref{main effort}, we apply main assumption \eqref{third-assump} to the estimate found in Step 1.

\item[Third step] We conclude the proof of Theorem \ref{main theo} by using $u=e^{\tau \phi} v$ in \eqref{main effort}.

\end{description}

\subsection{Step 1} \label{step1}
It is simple to check that operator $P^{\ast}_\tau v$, the formal adjoint of $P_\tau v$, is given by
\begin{equation*}
P^{\ast}_\tau v=\Delta_g v + \left(\tau^2 |\nabla_g \phi|^2+\tau (\Delta_g \phi)\right) v+2\tau \langle \nabla_g \phi, \nabla_g v \rangle,
\end{equation*}
hence the symmetric and antisymmetric parts of $P_\tau v$ are, respectively,
\begin{equation*}
S_{\tau} v =\frac{P_\tau v+P^{\ast}_\tau v}{2}= \Delta_g v + \tau^2 |\nabla_g \phi|^2 v,
\end{equation*}
\begin{equation*}
A_{\tau} v =\frac{P_\tau v-P^{\ast}_\tau v}{2}= -2\tau \langle \nabla_g \phi, \nabla_g v \rangle - \tau (\Delta_g \phi) v.
\end{equation*}
We have
\begin{equation}
\begin{aligned}
&\int_{\Omega} |P_{\tau} v|^2 =  \int_{\Omega} |S_{\tau} v+A_{\tau} v|^2\\ &=\int_{\Omega} |S_{\tau} v|^2 + \int_{\Omega} |A_{\tau} v|^2 + 2 \int_{\Omega} S_{\tau} v  A_{\tau} v.
\end{aligned}
\label{scomposizionePtau}
\end{equation}
We want to estimate from below the right-hand side of (\ref{scomposizionePtau}). We begin by $2 \int_{\Omega} S_{\tau} v  A_{\tau} v$. Notice that
\begin{equation*}
\begin{aligned}
2 \int_{\Omega} S_{\tau} v  A_{\tau} v = & -2 \tau \int_{\Omega} [2 \langle \nabla_g \phi, \nabla_g v \rangle \Delta_g v + 2 \tau^2 |\nabla_g \phi|^2 \langle \nabla_g \phi, \nabla_g v \rangle v \\ & + \Delta_g \phi (\Delta_g v) v + \tau^2 |\nabla_g \phi|^2 (\Delta_g \phi) v^2].
\end{aligned}
\end{equation*}
Using Rellich identity (\ref{rellich}) and the divergence theorem, we get
\begin{equation*}
\begin{aligned}
2 \int_{\Omega} S_{\tau} v  A_{\tau} v = & -2 \tau \int_{\Omega} [\Delta_g \phi |\nabla_g v|^2 - 2 \partial_i (\nabla_g \phi)_k g^{ij} \partial_j v \partial_k v + (\nabla_g \phi)_k \partial_k g^{ij} \partial_i v \partial_j v  \\ & + \tau^2 |\nabla_g \phi|^2 \langle \nabla_g \phi, \nabla_g (v^2) \rangle + \Delta_g \phi (\Delta_g v) v + \tau^2 |\nabla_g \phi|^2 (\Delta_g \phi) v^2].
\end{aligned}
\end{equation*}
Integrating by parts the term $\tau^2 |\nabla_g \phi|^2 \langle \nabla_g \phi, \nabla_g (v^2) \rangle$ we obtain
\vspace{3mm}
\begin{equation}
\begin{aligned}
2 \int_{\Omega} S_{\tau} v  A_{\tau} v = \; & \tau \int_{\Omega} \{ 4 [\partial_{il}^2 \phi (\nabla_g v)_i (\nabla_g v)_l + \partial_i g^{kl} \partial_l \phi \partial_k v (\nabla_g v)_i + \tau^2 \partial^2_{jr} \phi (\nabla_g \phi)_r (\nabla_g \phi)_j v^2 ] \\ & - 2 g^{k h} \partial_{h} \phi \partial_k g^{ij} \partial_i v \partial_j v + 2 \tau^2 (g^{ij} \partial_i \phi \partial_j g^{rs} \partial_r \phi \partial_s \phi) v^2 \} \\ & - 2 \tau \int_{\Omega} \Delta_g \phi |\nabla_g v|^2 - 2 \tau \int_{\Omega} \Delta_g \phi (\Delta_g v) v.
\end{aligned}
\label{SA}
\end{equation}
By \eqref{defPtau} we have
\begin{equation*}
\Delta_g v = P_{\tau} v + 2 \tau \langle \nabla_g \phi, \nabla_g v \rangle - \tau^2 |\nabla_g \phi |^2 v + \tau (\Delta_g \phi) v,
\end{equation*}
hence by \eqref{nuovadefQ} and \eqref{SA}, taking into account \eqref{identit�}, we have
\vspace{3mm}
\begin{equation}
\begin{aligned}
2 \int_{\Omega} S_{\tau} v  A_{\tau} v = \; & 4\tau^2 \int_{\Omega} \langle \nabla_g \phi, \nabla_g v \rangle^2 + \tau \int_{\Omega} Q (x, \nabla v,\tau v) \\ & - 2 \tau \int_{\Omega} \Delta_g \phi |\nabla_g v|^2 - 2 \tau \int_{\Omega} \Delta_g \phi (P_{\tau} v) v - 4 \tau^2 \int_{\Omega} \Delta_g \phi \langle \nabla_g \phi, \nabla_g v \rangle v \\ & + 2 \tau^3 \int_{\Omega} \Delta_g \phi |\nabla_g \phi |^2 v^2 - 2 \tau^2 \int_{\Omega} (\Delta_g \phi)^2 v^2.
\end{aligned}
\label{doppio prodotto}
\end{equation}
Now we estimate from below $\int_{\Omega} |S_{\tau} v|^2$. Let $\gamma \in C^{0,1}(\Omega)$ be a function that we are going to choose later. Integration by parts yields
\begin{equation*}
\int_{\Omega} \gamma (\Delta_g v) v = - \int_{\Omega} \left[\langle \nabla_g \gamma, \nabla_g v \rangle v + \gamma |\nabla_g v|^2 \right].
\end{equation*}
Then, we have
\begin{equation}
\begin{aligned}
\int_{\Omega} |S_{\tau} v|^2 & = \int_{\Omega} |(\Delta_g v + \tau^2 |\nabla_g \phi|^2 v - \tau \gamma v) + \tau \gamma v|^2 \\ & \geq 2 \tau \int_{\Omega} \left[\gamma v \Delta_g v + \tau^2 \gamma v^2 |\nabla_g \phi|^2 - \tau \gamma^2 v^2\right] \\ & = 2 \tau \int_{\Omega} \left[-\gamma (|\nabla_g v|^2 - \tau^2 v^2 |\nabla_g \phi|^2) - \tau \gamma^2 v^2 - \langle \nabla_g \gamma, \nabla_g v \rangle v\right].
\end{aligned}
\label{stima parte simmetrica}
\end{equation}
Notice that
\begin{equation}
\int_{\Omega} |A_{\tau} v|^2 = 4\tau^2 \langle \nabla_g \phi, \nabla_g v \rangle^2 + \tau^2 (\Delta_g \phi)^2 v^2 +4\tau^2 \langle \nabla_g \phi, \nabla_g v \rangle (\Delta_g \phi) v.
\label{parte antisimmetrica}
\end{equation}
Now we substitute (\ref{doppio prodotto}), (\ref{stima parte simmetrica}) and (\ref{parte antisimmetrica}) into (\ref{scomposizionePtau}) and, taking into account \eqref{identit�}, we get
\begin{equation}
\int_{\Omega} |P_{\tau} v|^2 \geq \int_{\Omega} \left(\widehat{Q}_{\tau}- 2 \tau \Delta_g \phi (P_{\tau} v) v\right),
\label{lower bound integrale}
\end{equation}
where
\begin{equation}
\begin{aligned}
\widehat{Q}_{\tau} = & \; 4\tau^2 \langle \nabla_g \phi, \nabla_g v \rangle^2 + \tau Q (x, \nabla v,\tau v) \\ & - 2 \tau (\Delta_g \phi + \gamma) (|\nabla_g v|^2 - \tau^2 |\nabla_g \phi|^2 v^2 ) - \tau^2 [(\Delta_g \phi)^2 + 2 \gamma^2 ] v^2 \\ & - 2 \tau \langle \nabla_g \gamma , \nabla_g v \rangle v .
\end{aligned}
\label{lower bound}
\end{equation}

\subsection{Step 2} \label{step2}
The main effort of the present step consists in using assumption  \eqref{ASSUMPTION} (in the form \eqref{third-assump}) in order to estimate from below $\widehat{Q}_{\tau}$ by a definite quadratic form.

Let $T_g$ be the tangential component of $\nabla_g v$, namely the component of $\nabla_g v$ orthogonal to $N_g$ (defined in \eqref{N_g}) with respect to $\langle \cdot \; , \cdot \rangle$. $T_g$ is given by
\begin{equation}
\begin{aligned}
T_g =  \nabla_g v - \langle \nabla_g v, N_g \rangle N_g.
\end{aligned}
\label{tangente}
\end{equation}
Denote by
\begin{equation}\label{XYZ}
X=|\langle \nabla_g v, N_g \rangle |, \; \; Y=|T_g|, \; \; Z = \tau |\nabla_g \phi| v,
\end{equation}
notice that
\begin{equation}
\begin{aligned}
|\nabla_g v|^2  = \langle \nabla_g v, N_g \rangle^2+|T_g|^2 =X^2+Y^2.
\end{aligned}
\label{norma-alpha}
\end{equation}

Since $\langle N_g, T_g \rangle = 0$, by applying \eqref{third-assump} to $\xi^{(g)}=T_g$ we have
\begin{equation}
\begin{aligned}
q(x,T_g) \geq c_0Y^2 - q(x,N_g) Y^2.
\end{aligned}
\label{result-assump}
\end{equation}
Now, denoting by $\{q_{hl}(x)\}^n_{h,l=1}$ the matrix associated to the quadratic form $q(x,\cdot)$, we have
\begin{equation}
\begin{aligned}
& q(x,\nabla_g v)  = q(x,T_g +\langle \nabla_g v, N_g \rangle N_g) \\ & = q (x,T_g) + \langle \nabla_g v, N_g \rangle^2 q (x,N_g) +  2  q_{hl}(x)  N_{g,h} T_{g,l}  \langle \nabla_g v, N_g \rangle.
\end{aligned}
\label{Q1 tangente}
\end{equation}
Since
\begin{equation}
\begin{aligned}
 q_{hl}  N_{g,h} T_{g,l}  \langle \nabla_g v, N_g \rangle & \geq  - |q_{hl}| |N_{g,h}| |T_{g,l}| |\langle N_g, \nabla_g v \rangle| \geq  - C_1X Y, \end{aligned}
\label{limitazione g}
\end{equation}
where $C_1$ depends on $\lambda$ and $\Lambda$ only, by \eqref{result-assump}, \eqref{Q1 tangente} and \eqref{limitazione g} we have

\vspace{3mm}
\begin{equation}
\begin{aligned}
q(x,\nabla_g v) + q (x,N_g) Z^2 \geq \; & c_0 Y^2 - q (x,N_g) Y^2 + X^2 q (x,N_g) \\ & + q (x,N_g) Z^2- 2  C_1X Y.
\end{aligned}
\label{new-result-assump}
\end{equation}

Denote
\begin{equation}
\alpha :=\Delta_g \phi + \gamma,
\label{alpha}
\end{equation}
and
\begin{equation*}
\begin{aligned}
F_{\alpha}(X,Y,Z)= \; \; & X^2 (4 \tau |\nabla_g \phi|^2 +q(x,N_g) - 2 \alpha )+Y^2(c_0 - q(x,N_g) - 2 \alpha ) \\ & + Z^2 (q(x,N_g)) + 2 \alpha) -2C_1 XY.
\end{aligned}
\end{equation*}
By \eqref{Q somma Q1}, \eqref{lower bound}, \eqref{norma-alpha} and \eqref{new-result-assump} we have

\vspace{3mm}
\begin{equation}
\begin{aligned}
\widehat{Q}_{\tau}\geq \tau F_{\alpha}(X,Y,Z) +R,
\end{aligned}
\label{result3}
\end{equation}
where
\begin{equation}
\begin{aligned}
R= -|\nabla_g \phi|^{-2} ( (\Delta_g \phi)^2 + 2 \gamma^2 ) Z^2 - 2 \langle \nabla_g \gamma , \nabla_g v  \rangle |\nabla_g \phi|^{-1} Z.
\end{aligned}
\label{result4}
\end{equation}
Notice that the matrix of quadratic form $F_{\alpha}$ is given by
\begin{equation}\label{matrix}
M=\begin{bmatrix}
4 \tau |\nabla_g \phi|^2 + q(x,N_g) - 2 \alpha & -C_1 & 0 \\ -C_1 & c_0 -q(x,N_g) - 2 \alpha & 0 \\ 0 & 0 & q(x,N_g) + 2 \alpha
\end{bmatrix}.
\end{equation}

Therefore $F_{\alpha}$ is positive definite if and only if all leading principal minors of $M$ are positive, that is if and only if we have
\begin{equation*}
\begin{aligned}
\begin{cases} q(x,N_g) + 2 \alpha >0 \\
\det M_1 >0 \\ \det M >0,
 \end{cases}
\end{aligned}
\end{equation*}
where
$$M_1=\begin{bmatrix} c_0 -q(x,N_g) - 2 \alpha & 0 \\ 0 & q(x,N_g) + 2 \alpha
\end{bmatrix}.$$
Now the first two conditions are satisfied if and only if
\begin{equation}
- \frac{q(x,N_g)}{2} < \alpha(x) < \frac{c_0 -q(x,N_g)}{2}.
\label{condizione alpha}
\end{equation}
By the \textbf{Main Assumption} we have $c_0>0$, hence condition \eqref{condizione alpha} is nonempty. Let $\widetilde{\alpha}$ be a function belonging to $C^{0,1}(\overline{\Omega})$ that satisfies \eqref{condizione alpha}. Also we may assume that
\begin{equation}\label{alpha-1}
\frac{c_0}{4}\leq q(x,N_g)+2\widetilde{\alpha}(x)\leq \frac{c_0}{2},\quad  x\in\Omega \mbox{ a.e. }
\end{equation}
and
\begin{equation}\label{alpha-2}
||\widetilde{\alpha}||_{C^{0,1}(\bar{\Omega})}\leq C_2,
\end{equation}
where $C_2$ depends on $\lambda,\Lambda,M$ and $c_0$ only. Now we choose $\alpha=\widetilde{\alpha}$. By  \eqref{elliptic}, \eqref{lipschitz}, \eqref{bound-phi}, \eqref{Q1} and \eqref{matrix} we have there exists $\tau_1$ depending on $\lambda,\Lambda,m, M$ and $c_0$ only such that if $\tau\geq \tau_1$ then

\begin{equation}\label{detM}
\det M\geq\frac{c^2_0m^2}{2\lambda}.
\end{equation}
Therefore by \eqref{alpha-1} and \eqref{detM} we have there exists $c_1>0$ depending on $\lambda,\Lambda,m, M$ and $c_0$ only such that if $\tau\geq \tau_1$ then

\begin{equation}
F_{\tilde{\alpha}}(X,Y, Z) \geq c_1 (X^2 + Y^2 + Z^2),
\label{(3-21.1)}
\end{equation}
By \eqref{norma-alpha}, \eqref{XYZ}, \eqref{result3} and \eqref{(3-21.1)} we have,

\begin{equation}\label{result4}
\widehat{Q}_{\tau}\geq \tau c_1 \left(|\nabla_g v|^2 + \tau^2 |\nabla_g \phi|^2 v^2\right)+R,
\end{equation}
for every $\tau \geq \tau_1$.
By \eqref{result4}, recalling that $Z = \tau |\nabla_g \phi| v$, we have
\begin{equation}\label{resto}
|R|\leq C_3\left(|\nabla_g v|^2 + \tau^2 |\nabla_g \phi|^2 v^2\right),
\end{equation}
where $C_3$ depends on $\lambda,\Lambda,m$ and $M$ only. Hence, by \eqref{result4}, \eqref{resto} and \eqref{lower bound integrale} there exists $\tau_2 \geq \tau_1$, $\tau_2$ depends on $\lambda,\Lambda,m, M$ and $c_0$ only such that for every $\tau \geq \tau_2$ we have

\begin{equation}
\int_{\Omega} |P_{\tau} v|^2 \geq \int_{\Omega} \left[\frac{c_1}{2}\left(\tau|\nabla_g v|^2 + \tau^3 |\nabla_g \phi|^2 v^2\right)- 2 \tau \Delta_g \phi (P_{\tau} v) v\right].
\label{lower-1}
\end{equation}
Now we have
\begin{equation}
 \left\vert 2 \tau \Delta_g \phi (P_{\tau} v) v\right\vert\leq C_4 \left\vert P_{\tau} v\right\vert^2+\tau^2|\nabla_g \phi|^2 v^2 .
\label{lower-2}
\end{equation}
where $C_4$ depends on $\lambda,\Lambda$ and $M$ only. Finally, let $\tau_0:=\max\left\{2c^{-1}_1,\tau_2\right\}$,  we get
\begin{equation}
\tau \int_{\Omega} (|\nabla_g v|^2 + \tau^2 |\nabla_g \phi|^2 v^2) \leq K\int_{\Omega} |P_{\tau} v|^2,
\label{main effort-1}
\end{equation}
for every $v\in C^{\infty}_0(\Omega)$ and every $\tau\geq \tau_0$, where $K$ depends on $\lambda,\Lambda,m, M$ and $c_0$ only.
\subsection{Step 3} \label{step3}
In this step we derive \eqref{carleman} by \eqref{main effort-1}.
Let $\epsilon \in (0,1)$ be a number that we will choose later. Since $v= e^{\tau \phi} u$, by \eqref{main effort-1} we have trivially
\begin{equation*}
\int_{\Omega} (\tau |\nabla_g v|^2 + \tau^3 |\nabla_g \phi|^2 v^2) \geq \epsilon  \int_{\Omega} \tau|\nabla_g u + \tau (\nabla_g \phi) u|^2 e^{2\tau \phi} + \int_{\Omega} \tau^3 |\nabla_g \phi|^2 u^2 e^{2\tau \phi}.
\end{equation*}
for  every $\tau\geq \tau_0$.

Now, by Young inequality we have
\begin{equation*}
\begin{aligned}
&  \epsilon\int_{\Omega}  (\tau|\nabla_g u + \tau (\nabla_g \phi) u|^2) e^{2\tau \phi} + \int_{\Omega} \tau^3 |\nabla_g \phi|^2 u^2 e^{2\tau \phi} \\ & = \epsilon  \int_{\Omega} \tau(|\nabla_g u|^2 + 2 \tau \langle \nabla_g \phi, \nabla_g u \rangle u + \tau^2 |\nabla_g \phi|^2 u^2) e^{2\tau \phi} + \int_{\Omega} \tau^3 |\nabla_g \phi|^2 u^2 e^{2\tau \phi}  \\ & \geq  \frac{\epsilon \tau}{2}  \int_{\Omega} |\nabla_g u|^2 e^{2\tau \phi}  + \tau^3 (1-\epsilon) \int_{\Omega} |\nabla_g \phi|^2 u^2 e^{2\tau \phi}.
\end{aligned}
\end{equation*}
Hence, for $\epsilon=\frac{1}{2}$, we have
\begin{equation}\label{quasifinito}
\begin{aligned}
& K\int_{\Omega} |\Delta_g u|^2 e^{2 \tau \phi}= K\int_{\Omega} |P_{\tau} v|^2  \geq  \int_{\Omega} (\tau |\nabla_g v|^2 + \tau^3 |\nabla_g \phi|^2 v^2) \geq \\ & \geq \frac{\tau}{4}  \int_{\Omega} |\nabla_g u|^2 e^{2\tau \phi}  + \frac{\tau^3}{2}  \int_{\Omega} |\nabla_g \phi|^2 u^2 e^{2\tau \phi}.
\end{aligned}
\end{equation}
By \eqref{elliptic}, \eqref{bound-phi} and \eqref{quasifinito} we obtain \eqref{carleman-intr}.  $\square $

\bigskip

The following proposition is useful to construct a weight function $\phi$ that satisfies the Main Assumption with desired level sets.

\begin{prop} \label{proposition}
Assume that \eqref{elliptic} and \eqref{lipschitz} are satisfied. Let $m_0, M_0$ be positive numbers. Let $\psi\in C^2\left(\bar{\Omega}\right)$ satisfy

\begin{subequations}
\label{varphi}
\begin{equation}
\label{varphi-a}
\min_{x\in\bar{\Omega}}|\nabla_g\psi|\geq m_0,
\end{equation}
\begin{equation}
\label{varphi-b}
\left\Vert\psi\right\Vert_{C^2\left(\bar{\Omega}\right)}\leq M_0.
\end{equation}
\end{subequations}
Then there exists $C>0$, depending on $\lambda, \Lambda, M_0$ such that if $\mu\geq \frac{C}{m_0}$ then $\phi=e^{\mu\psi}$ satisfies \eqref{ASSUMPTION}.
\end{prop}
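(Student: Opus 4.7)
The plan is to verify the equivalent form \eqref{third-assump} of the Main Assumption by direct substitution: the convexifying factor $e^{\mu\psi}$ generates, inside the Hessian of $\phi$, a term $\mu\,\partial_j\psi\,\partial_k\psi$ which, contracted with $\vartheta\otimes\vartheta$, becomes a perfect square in the direction of $\nabla_g\psi$. This ``good'' term will be annihilated on the $g$-orthogonal complement of $N_g$ (where it needs only to dominate a bounded remainder), but survives on $N_g$ itself with a coefficient bounded below by $m_0^2$, where it overwhelms the bad contribution.

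Concretely, I would first record that $(\nabla_g\phi)_k = \mu e^{\mu\psi}(\nabla_g\psi)_k$, $|\nabla_g\phi| = \mu e^{\mu\psi}|\nabla_g\psi|$ and $\partial^2_{jk}\phi = \mu e^{\mu\psi}\bigl[\mu\,\partial_j\psi\,\partial_k\psi + \partial^2_{jk}\psi\bigr]$. Substituting these into \eqref{Q1} and factoring $\mu e^{\mu\psi}$ out, one finds
\[
q(x,\vartheta) = \mu e^{\mu\psi}\bigl[\,4\mu\,\langle \nabla_g\psi,\vartheta\rangle^2 + R(x,\vartheta)\,\bigr],
\]
with the $\mu$-independent remainder
\[
R(x,\vartheta) = 4\partial^2_{jk}\psi\,\vartheta_j\vartheta_k - 4(\partial_s g_{kh})(\nabla_g\psi)_k\vartheta_h\vartheta_s + 2(\partial_s g_{tw})(\nabla_g\psi)_s\vartheta_t\vartheta_w.
\]
Using \eqref{elliptic}, \eqref{lipschitz}, \eqref{varphi-b} and the resulting upper bound $|\nabla_g\psi|\le \sqrt{\lambda}\,M_0$, one obtains a uniform estimate $|R(x,\vartheta)|\le K\,|\vartheta|^2$ for some $K = K(\lambda,\Lambda,M_0)$.

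Since $N_g = \nabla_g\phi/|\nabla_g\phi| = \nabla_g\psi/|\nabla_g\psi|$, the orthogonality hypothesis $\langle \xi^{(g)}, N_g\rangle = 0$ is precisely $\langle\nabla_g\psi, \xi^{(g)}\rangle = 0$, and it annihilates the $\mu^2$-term in $q(x,\xi^{(g)})$; hence $|q(x,\xi^{(g)})|\le \mu e^{\mu\psi}\, K\,|\xi^{(g)}|^2$. On $N_g$ instead, $\langle\nabla_g\psi, N_g\rangle = |\nabla_g\psi|\ge m_0$ by \eqref{varphi-a}, so $q(x, N_g)\ge \mu e^{\mu\psi}\bigl[\,4\mu\, m_0^2 - K\,\bigr]$. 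Adding, for any $\xi^{(g)}$ with $|\xi^{(g)}| = \lambda$ in the notation of \eqref{third-assump} and $\langle\xi^{(g)}, N_g\rangle = 0$,
\[
q(x,\xi^{(g)}) + \lambda^2\, q(x, N_g) \ge \mu e^{\mu\psi}\,\lambda^2 \bigl[\,4\mu\, m_0^2 - 2K\,\bigr].
\]
Taking $\mu$ sufficiently large (of the order permitted by the hypothesis $\mu\ge C/m_0$), the bracket is positive and bounded away from $0$, so the right-hand side is bounded below by $c_0(|\xi^{(g)}|^2 + \lambda^2)$ for some $c_0>0$ uniform on $\bar\Omega$, which is exactly \eqref{third-assump}.

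The main obstacle is the uniform remainder bound $|R|\le K|\vartheta|^2$, which requires carefully collecting all contributions involving $\partial g$ and $D^2\psi$ and converting Euclidean norms into the $g$-norm via \eqref{elliptic}; once that bookkeeping is done, the rest is a short comparison between the $\mu^2$-term surviving in $q(x,N_g)$ and the $O(\mu)$ terms in $q(x,\xi^{(g)})$ and $q(x,N_g)$.
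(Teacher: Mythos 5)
Your proof is correct and follows essentially the same route as the paper: both expand the Hessian of $\phi=e^{\mu\psi}$, isolate the convexifying term $\mu^2\langle\nabla_g\psi,\cdot\rangle^2$, note that it vanishes on directions $g$-orthogonal to $N_g$ but contributes at least $\mu^2 m_0^2$ along $N_g$, and absorb the remaining $O(\mu)$ contributions for $\mu$ large. You carry out the bookkeeping in the one-variable quadratic form $q(x,\vartheta)$ of \eqref{third-assump} while the paper works with the quantity $Q_0$ in \eqref{Qgeq}--\eqref{Q-0}; this is a purely cosmetic difference, and both arguments arrive at the same dominance and the same lower bound on $\mu$ of order $m_0^{-2}$.
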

\begin{proof}
We prove that for $\mu$ large enough, $\phi=e^{\mu\psi}$ satisfies \eqref{new-assump} (equivalent to \eqref{ASSUMPTION}). We have
\begin{equation}\label{derivate}
\partial_j\phi=\mu\partial_{j}\psi e^{\mu\psi}\mbox{ , } \partial_{jk}\phi=\left(\mu\partial_{jk}\psi+\mu^2\partial_{j}\psi\partial_{k}\psi\right)e^{\mu\psi}.
\end{equation}
Since \eqref{varphi-a} holds true, it is enough to check that for $\mu$ large enough we have that there exists $c_0>0$ such that, whenever $\tau\neq 0$ and
\begin{equation}\label{xi-phi}
|\xi^{(g)}|^2=\tau^2\mu^2|\nabla_g \psi|^2e^{2\mu\psi} \quad\mbox{ and }\quad  \langle \xi^{(g)}, \nabla_g \psi \rangle =0
\end{equation}
we have
\begin{equation}\label{psedocon}
Q (x, \xi,\tau) \geq c_0(|\xi^{(g)}|^2 + \tau^2 \mu^2|\nabla_g \psi|^2e^{2\mu\psi}),
\end{equation}
where $Q (x, \xi,\tau)$ is given by \eqref{nuovadefQ} (with $\phi=e^{\mu\psi}$).
Observe that by first equality in \eqref{xi-phi} we have

\begin{equation}\label{Qgeq}
Q (x, \xi,\tau) \geq 4Q_0-C_5\tau^2 \mu^3|\nabla_g \psi|^3e^{3\mu\psi},
\end{equation}
where $C_5$ depends on $\lambda$ and $\Lambda$ only and
\begin{equation*}
\begin{aligned}
 Q_0 &=e^{\mu\psi}\left(\mu\partial^2_{jk} \psi \xi^{(g),j} \xi^{(g),k}+\mu^2\langle \xi^{(g)}, \nabla_g \psi \rangle^2\right) \\ & +\tau^2 e^{3\mu\psi}\left(\mu^4|\nabla_g \psi|^4+\mu^3\partial^2_{jk} \psi (\nabla_g \psi)_j (\nabla_g \psi)_k\right).
\end{aligned}
\end{equation*}
Now, by \eqref{xi-phi} we have

\begin{equation*}
\begin{aligned}
 Q_0 =& e^{\mu\psi}\mu\partial^2_{jk} \psi \xi^{(g),j} \xi^{(g),k} +\tau^2 e^{3\mu\psi}\left(\mu^4|\nabla_g \psi|^4+\mu^3\partial^2_{jk} \psi (\nabla_g \psi)_j (\nabla_g \psi)_k\right)\\ &\geq \tau^2\mu^3|\nabla_g \psi|^2 e^{3\mu\psi}\left(\mu|\nabla_g \psi|^2-C_6M_0\right)\geq \tau^2\mu^3|\nabla_g \psi|^2 e^{3\mu\psi}\left(\mu m_0^2-C_6M_0\right)
\end{aligned}
\end{equation*}
where $C_6$ depends on $\lambda$ and $\Lambda$ only. Hence, for every $\mu\geq 2C_6M_0m_0^{-2}$ we have
\begin{equation}\label{Q-0}
Q_0\geq\frac{1}{2}\tau^2\mu^4m_0^2|\nabla_g \psi|^2 e^{3\mu\psi}.
\end{equation}
Now by \eqref{varphi-b}, \eqref{Qgeq} and \eqref{Q-0} we have, for every $\mu\geq M_0m_0^{-2}\max\{C_5,2C_6\}$,
\begin{equation}\label{Qgeq1}
Q (x, \xi,\tau) \geq \tau^2\mu^4m_0^2|\nabla_g \psi|^2 e^{3\mu\psi}
\end{equation}
and taking into account first equality in \eqref{xi-phi} we obtain \eqref{psedocon} with $c_0=\frac{1}{2}m_0^2\mu^2e^{\mu\Phi_0}$ and $\Phi_0=\min_{\bar{\Omega}}\psi$.

\end{proof}

\section{A three sphere inequality}\label{sec:tre sfere}

In this section we apply Carleman estimate \eqref{carleman} to prove a three sphere inequality for a solution $u$ to the equation
\begin{equation}\label{equation}
\Delta_g u = \langle b , \nabla_g u \rangle + a u \; \; \; \mbox{ in } B_1,
\end{equation}
where $B_1$ is the ball of $\mathbb{R}^n$ of radius $1$ centered at $0$, $b\in L^{\infty} (B_1,\mathbb{R}^n)$ and $a\in L^{\infty} (B_1,\mathbb{R})$. In addition, let $M_1$ be a given positive number, we assume

\begin{equation}\label{bound-coeff}
\left\Vert b\right\Vert_{L^{\infty} (B_1,\mathbb{R}^n)}\leq M_1 \quad\mbox{ and }\quad \left\Vert c\right\Vert_{L^{\infty} (B_1,\mathbb{R})}\leq M_1.
\end{equation}

\begin{theo}[\textbf{Three Sphere Inequality}] \label{trsf}
Assume that \eqref{elliptic}, \eqref{lipschitz} and \eqref{bound-coeff} are satisfied. Let $r_0, \rho$ satisfy $r_0<\rho<1$. Let $u \in H^2_{loc}(B_1)$ be a solution to \eqref{equation} then
\begin{equation}\label{carleman}
\| u \|_{L^2(B_\rho)} \leq C   \| u\|^{\theta}_{L^2(B_{r_0})} \| u \|^{1-\theta}_{L^2(B_1)},
\end{equation}
where $C$ and $\theta$, $\theta\in (0,1)$, depend on $\lambda,\Lambda,m, M$ and $r_0$ only.
\end{theo}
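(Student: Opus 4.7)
The plan is to apply the Carleman estimate of Theorem \ref{main theo} with a radial weight adapted to concentric balls, together with a cutoff whose inner transition annulus lies \emph{strictly inside} $B_{r_0}$. Placing the inner transition strictly inside $B_{r_0}$ is what allows the inner cutoff commutator to be controlled, via Caccioppoli, by $\|u\|_{L^2(B_{r_0})}$ alone (rather than by the $L^2$ norm on a slightly larger ball, which would obstruct the interpolation). This is the key technical point, and it allows the three sphere inequality to be obtained from a single application of Theorem \ref{main theo}.

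I take $\psi(x)=-\log|x|$ on an annular neighborhood $\Omega\subset B_1\setminus\overline{B_\eta}$ with $0<\eta<r_0$, and invoke Proposition \ref{proposition} to obtain, for $\mu$ large depending on $r_0$, $\lambda$, $\Lambda$, a weight $\phi=e^{\mu\psi}=|x|^{-\mu}$ satisfying the Main Assumption on $\Omega$. I fix radii $0<r_0-\delta_0<r_0-\delta_1<r_0<\rho<R_2<R_3<1$ and a cutoff $\chi\in C_0^\infty(B_{R_3}\setminus\overline{B_{r_0-\delta_0}})$ with $\chi\equiv 1$ on $B_{R_2}\setminus\overline{B_{r_0-\delta_1}}$, so that $\nabla\chi$ is supported on the two annuli $A_1=B_{r_0-\delta_1}\setminus\overline{B_{r_0-\delta_0}}\Subset B_{r_0}$ and $A_2=B_{R_3}\setminus\overline{B_{R_2}}\Subset B_1$. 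Applying Theorem \ref{main theo} to $v=\chi u$ and using \eqref{equation} to expand
\begin{equation*}
\Delta_g v=\chi(\langle b,\nabla_g u\rangle+au)+2\langle\nabla_g\chi,\nabla_g u\rangle+u\Delta_g\chi,
\end{equation*}
one sees that on $\{\chi=1\}$ only the first group survives, and the bulk contribution $C\chi^2(u^2+|\nabla_g u|^2)$ on the right-hand side can be absorbed into the left once $\tau$ exceeds a constant depending on $M_1$, thanks to the factors $\tau$ and $\tau^3$ on the left. What remains on the right is supported entirely on $A_1\cup A_2$.

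Next I exploit the monotonicity of $\phi=|x|^{-\mu}$ in $|x|$. Since $\phi$ is strictly decreasing in $|x|$, one has $e^{2\tau\phi}\geq e^{2\tau\rho^{-\mu}}$ on $B_\rho\setminus\overline{B_{r_0-\delta_1}}$ (a subset of $\{\chi=1\}$), while $e^{2\tau\phi}\leq e^{2\tau(r_0-\delta_0)^{-\mu}}$ on $A_1$ and $e^{2\tau\phi}\leq e^{2\tau R_2^{-\mu}}$ on $A_2$. A standard Caccioppoli inequality for \eqref{equation}, applied with a test function whose support is a small thickened copy of $A_1$ lying inside $B_{r_0}$ (which is possible precisely because $\delta_0>\delta_1$), gives $\|u\|_{H^1(A_1)}^2\leq C\|u\|_{L^2(B_{r_0})}^2$, and similarly $\|u\|_{H^1(A_2)}^2\leq C\|u\|_{L^2(B_1)}^2$. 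Combining the pieces and dividing by $e^{2\tau\rho^{-\mu}}$ yields, for all $\tau\geq\tau_0$,
\begin{equation*}
\tau^3\|u\|_{L^2(B_\rho\setminus B_{r_0-\delta_1})}^2\leq Ce^{2\tau\alpha}\|u\|_{L^2(B_{r_0})}^2+Ce^{-2\tau\beta}\|u\|_{L^2(B_1)}^2,
\end{equation*}
with $\alpha=(r_0-\delta_0)^{-\mu}-\rho^{-\mu}>0$ and $\beta=\rho^{-\mu}-R_2^{-\mu}>0$.

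The final step is optimization in $\tau$. When $\|u\|_{L^2(B_{r_0})}\leq\|u\|_{L^2(B_1)}$ is small enough that the balancing choice $\tau^\ast=\frac{1}{2(\alpha+\beta)}\log\bigl(\|u\|_{L^2(B_1)}^2/\|u\|_{L^2(B_{r_0})}^2\bigr)$ is admissible (i.e.\ $\tau^\ast\geq\tau_0$), plugging in $\tau=\tau^\ast$ yields $\|u\|_{L^2(B_\rho\setminus B_{r_0-\delta_1})}\leq C\|u\|_{L^2(B_{r_0})}^\theta\|u\|_{L^2(B_1)}^{1-\theta}$ with $\theta=\beta/(\alpha+\beta)\in(0,1)$; in the complementary regime the conclusion follows trivially from $\|u\|_{L^2(B_\rho)}\leq\|u\|_{L^2(B_1)}\leq C\|u\|_{L^2(B_{r_0})}$. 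Adding the trivial bound $\|u\|_{L^2(B_{r_0-\delta_1})}\leq\|u\|_{L^2(B_{r_0})}$ completes the proof. The main obstacle is precisely arranging the cutoff so that $A_1\Subset B_{r_0}$: without this gap the Caccioppoli step would produce $\|u\|_{L^2(B_{r_0+\varepsilon})}$ on the right and the resulting estimate would become circular.
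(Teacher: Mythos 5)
Your argument is essentially the same as the paper's: a radially decreasing Carleman weight obtained from Proposition \ref{proposition}, a cutoff whose inner transition annulus lies strictly inside $B_{r_0}$, Caccioppoli to remove the gradient terms on the two transition annuli, monotonicity of the weight, and optimization in $\tau$. The only deviations are cosmetic --- you take $\psi=-\log|x|$ (so $\phi=|x|^{-\mu}$) where the paper takes $\psi=-|x|^2$ (so $\phi=e^{-\mu|x|^2}$), and you parametrize the cutoff radii differently --- neither of which changes the structure of the proof.
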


\begin{proof}
Given $\phi_{\mu}(x)= e^{-\mu |x|^2}$, we have by Proposition \ref{proposition} ($\psi(x)=-|x|^2$) that there exist $\overline{C}_1, \; \tau_0$ and $\mu_0 >0$,
 $\overline{C}_1, \tau_0, \mu_0$ depending on $\lambda,\Lambda,m, M$ and $r_0$ only, such that the following Carleman estimate holds true
\begin{equation}
\begin{aligned}
\int_{B_1 \setminus \bar{B}_{\frac{r_0}{8}}} (\tau^3 w^2 + \tau |\nabla_g w|^2) e^{2 \tau \phi_{\mu_0}} \leq \overline{C}_1 \int_{B_1 \setminus \bar{B}_{\frac{r_0}{8}}} |\Delta_g w|^2 e^{2 \tau \phi_{\mu_0}} ,
\end{aligned}
\label{(5-1.1)}
\end{equation}
for every $w \in C^{\infty}_0 (B_1 \setminus \bar{B}_{\frac{r_0}{8}})$ and for every $\tau \geq \tau_0$.
Let $\widetilde{\eta}\in C_0^2(0,1)$ such that $0\leq\widetilde{\eta}\leq 1$, $\widetilde{\eta}\equiv 1$ in $\left(\frac{r_0}{2},\frac{1}{2} \right)$, $\widetilde{\eta}\equiv 0$ in $\left(0,\frac{r_0}{4}\right)\cup \left(\frac{2}{3},1\right)$, $|(d^k/dt^k) \widetilde{\eta}|\leq cr_0^{-k}$ in $\left(\frac{r_0}{4},\frac{r_0}{2}\right)$, $|(d^k/dt^k) \widetilde{\eta}|\leq c$ in $\left(\frac{1}{2},\frac{2}{3}\right)$, $k=0,1,2$, where $c$ is a constant. Let us denote $\eta(x)=\widetilde{\eta}(|x|)$. Since $C^{\infty}_0 (B_1 \setminus \bar{B}_{\frac{r_0}{8}})$ is dense in $H^2_0 (B_1 \setminus \bar{B}_{\frac{r_0}{8}})$, inequality \eqref{(5-1.1)} holds true for $w=\eta u$ and we have, for every $\tau\geq \tau_0$,

\begin{equation}
\begin{aligned}
\int_{B_1 \setminus \bar{B}_{\frac{r_0}{4}}} (\tau^3 (\eta u)^2 + \tau |\nabla_g (\eta u)|^2) e^{2 \tau \phi_{\mu_0}} \leq \overline{C}_1 \int_{B_1 \setminus \bar{B}_{\frac{r_0}{4}}} |\Delta_g (\eta u)|^2 e^{2 \tau \phi_{\mu_0}}.
\end{aligned}
\label{(5-2.2)}
\end{equation}
Recalling that $\eta=1$ in $B_{\frac{1}{2}} \setminus \bar{B}_{\frac{r_0}{2}}$, the left-hand side of (\ref{(5-2.2)}) can be estimate from below trivially and we have
\begin{equation}
\begin{aligned}
\int_{B_{\frac{1}{2}} \setminus \bar{B}_{\frac{r_0}{2}}} (\tau^3 u^2 + \tau |\nabla_g u|^2) e^{2 \tau \phi_{\mu_0}} \leq \overline{C}_1 \int_{B_1 \setminus \bar{B}_{\frac{r_0}{4}}} |\Delta_g (\eta u)|^2 e^{2 \tau \phi_{\mu_0}}.
\end{aligned}
\label{(5-4.1)}
\end{equation}
Now by \eqref{equation} we have
\begin{equation}
\begin{aligned}
\left\vert\Delta_g (u \eta)\right\vert & = \left\vert\left(\langle b , \nabla_g u \rangle + au\right) \eta + 2 \langle \nabla_g \eta , \nabla_g u \rangle + u \Delta_g \eta\right\vert \\ & \leq \overline{C}_2 \left(\left\vert\nabla_g u\right\vert+|u|\right)\chi_{B_{\frac{2}{3}}\setminus \bar{B}_{\frac{r_0}{4}}},
\end{aligned}
\label{(5-3.1)}
\end{equation}
where $\chi_{B_{\frac{2}{3}}\setminus \bar{B}_{\frac{r_0}{4}}}$ is the characteristic function of the set $B_{\frac{2}{3}}\setminus \bar{B}_{\frac{r_0}{4}}$ and $\overline{C}_2$ depends on $\lambda,\Lambda,m, M, M_1$ and $r_0$ only.

Then, by  \eqref{(5-4.1)} and \eqref{(5-3.1)} we have
\begin{equation}
\begin{aligned}
& \int_{B_{\frac{1}{2}} \setminus \bar{B}_{\frac{r_0}{2}}} (\tau^3 u^2 + \tau |\nabla u|^2) e^{2 \tau \phi_{\mu_0}} \leq \overline{C}_3\int_{B_{\frac{2}{3}} \setminus \bar{B}_{\frac{r_0}{4}}}  \left(u^2 + |\nabla_g u|^2\right) e^{2 \tau \phi_{\mu_0}}\\ & =\overline{C}_3 \int_{B_{\frac{1}{2}} \setminus \bar{B}_{\frac{r_0}{2}}}  \left(u^2 + |\nabla_g u|^2\right) e^{2 \tau \phi_{\mu_0}} \\ &+ \overline{C}_3\int_{B_{\frac{r_0}{2}} \setminus \bar{B}_{\frac{r_0}{4}}}  \left(u^2 + |\nabla_g u|^2\right) e^{2 \tau \phi_{\mu_0}}  + \overline{C}_3\int_{B_{\frac{2}{3}} \setminus \bar{B}_{\frac{1}{2}}} \left(u^2 + |\nabla_g u|^2\right) e^{2 \tau \phi_{\mu_0}},
\end{aligned}
\label{(5-6.1)}
\end{equation}
where $\overline{C}_3$ depends on $\lambda,\Lambda,m, M, M_1$ and $r_0$ only.
By (\ref{(5-6.1)}) we have

\begin{equation}
\begin{aligned}
& \left(\tau-\overline{C}_3\right)\int_{B_{\frac{1}{2}} \setminus \bar{B}_{\frac{r_0}{2}}} (|\nabla_g u|^2 + u^2 )e^{2 \tau \phi_{\mu_0}} \leq \\ &  \overline{C}_3\int_{B_{\frac{r_0}{2}} \setminus \bar{B}_{\frac{r_0}{4}}} \left(u^2 + |\nabla_g u|^2\right) e^{2 \tau \phi_{\mu_0}}+\overline{C}_3\int_{B_{\frac{2}{3}} \setminus \bar{B}_{\frac{1}{2}}} \left(u^2 + |\nabla_g u|^2\right) e^{2 \tau \phi_{\mu_0}}
\end{aligned}
\label{(5-8.1)}
\end{equation}
Hence, for every $\tau\geq\overline{\tau}_1:=\max\{2\overline{C}_3,\tau_0\}$ we have
\begin{equation}
\begin{aligned}
& \int_{B_{\frac{1}{2}} \setminus \bar{B}_{\frac{r_0}{2}}} u^2 e^{2 \tau \phi_{\mu_0}} \leq   \int_{B_{\frac{r_0}{2}} \setminus \bar{B}_{\frac{r_0}{4}}} \left(u^2 + |\nabla_g u|^2\right) e^{2 \tau \phi_{\mu_0}}+\int_{B_{\frac{2}{3}} \setminus \bar{B}_{\frac{1}{2}}} \left(u^2 + |\nabla_g u|^2\right) e^{2 \tau \phi_{\mu_0}}.
\end{aligned}
\label{(5-8.1)-1}
\end{equation}
Since $$[0,+\infty)\ni t\rightarrow \widetilde{\phi}_{\mu_0}\left(t\right):=e^{-\mu_0 t^2}$$ is a decreasing function, \eqref{(5-8.1)-1} gives, for every $ \tau \geq \overline{\tau}_1$.
\begin{equation}
\begin{aligned}
 &\int_{B_{\frac{1}{2}} \setminus \bar{B}_{\frac{r_0}{2}}} u^2 e^{2 \tau \phi_{\mu_0}}  \leq e^{2 \tau \widetilde{\phi}_{\mu_0}\left(r_0/4\right)} \int_{B_{\frac{r_0}{2}} \setminus \bar{B}_{\frac{r_0}{4}}} \left(u^2 + |\nabla_g u|^2\right)\\ & +e^{2 \tau \widetilde{\phi}_{\mu_0}\left(1/2\right)}\int_{B_{\frac{2}{3}} \setminus \bar{B}_{\frac{1}{2}}} \left(u^2 + |\nabla_g u|^2\right),
\end{aligned}
\label{(5-10.1)}
\end{equation}

Now, by the Caccioppoli inequality \cite{caccioppoli}, see also \cite{Gi}, we have
\begin{equation}
\begin{aligned}
& \int_{B_{\frac{r_0}{2}} \setminus \bar{B}_{\frac{r_0}{4}}}  |\nabla_g u|^2 \leq \frac{\overline{C}_4}{r^2_0} \int_{B_{r_0} \setminus \bar{B}_{\frac{r_0}{8}}}  u^2,   \\
&\int_{B_{\frac{2}{3}} \setminus \bar{B}_{\frac{1}{2}}}  |\nabla_g u|^2   \leq \overline{C}_4 \int_{B_{\frac{5}{6}} \setminus \bar{B}_{\frac{1}{6}}}  u^2 ,
\end{aligned}
\label{(5-11.1)}
\end{equation}
where $\overline{C}_4$ depends on $\lambda $ only,
hence by \eqref{(5-10.1)} and \eqref{(5-11.1)} we get
\begin{equation}\label{17-9-17-1}
\begin{aligned}
& \int_{B_{\frac{1}{2}} \setminus \bar{B}_{\frac{r_0}{2}}} u^2 e^{2 \tau \phi_{\mu_0}} \\ & \leq \overline{C}_5 \left(e^{2 \tau \widetilde{\phi}_{\mu_0}\left(r_0/4\right)}  \int_{B_{r_0} \setminus \bar{B}_{\frac{r_0}{8}}}  u^2+ e^{2 \tau \widetilde{\phi}_{\mu_0}\left(1/2\right)}\int_{B_{\frac{5}{6}} \setminus \bar{B}_{\frac{1}{6}}}  u^2\right),
\end{aligned}
\end{equation}
where $\overline{C}_5$ depends on $\lambda,\Lambda,m, M, M_1$ and $r_0$ only.

Let $\rho$ be such that $\frac{r_0}{2} < \rho < \frac{1}{2}$. Trivially we have
\begin{equation}\label{17-9-17-2}
\int_{B_{\frac{1}{2}} \setminus \bar{B}_{\frac{r_0}{2}}} u^2 e^{2 \tau \phi_{\mu_0}}  \geq  \int_{B_{\rho} \setminus \bar{B}_{\frac{r_0}{2}}} u^2 e^{2 \tau \phi_{\mu_0}}\geq e^{2 \tau \widetilde{\phi}_{\mu_0} (\rho)} \int_{B_{\rho} \setminus \bar{B}_{\frac{r_0}{2}}} u^2.
\end{equation}
Since that $B_{r_0} \setminus \bar{B}_{\frac{r_0}{8}} \subset B_{r_0}$, $B_{\frac{5}{6}} \setminus \bar{B}_{\frac{1}{6}} \subset B_{1}$, by \eqref{17-9-17-1} and \eqref{17-9-17-2} we get:
\begin{equation}\label{17-9-17-3}
\begin{aligned}
\int_{B_{\rho} \setminus \bar{B}_{\frac{r_0}{2}}} u^2  \; dx \leq  \overline{C}_5 & \left( e^{2 \tau [\widetilde{\phi}_{\mu_0}(\frac{r_0}{4}) - \widetilde{\phi}_{\mu_0} (\rho)]} \int_{B_{r_0}}  u^2  \right. \\ & \left.  +e^{2 \tau [\widetilde{\phi}_{\mu_0}(\frac{1}{2})- \widetilde{\phi}_{\mu_0} (\rho)]} \int_{B_1} u^2  \right),
\end{aligned}
\end{equation}
for every $ \tau \geq \overline{\tau}_1$.
Now, add $\int_{B_{\frac{r_0}{2}}}  u^2 $ to both the sides of the inequality \eqref{17-9-17-3}, and by the trivial inequality  $\int_{B_{\frac{r_0}{2}}}  u^2 \leq \int_{B_{r_0}} u^2$, we obtain
\begin{equation}\label{(5-13.1)}
\begin{aligned}
\int_{B_{\rho}} u^2 \leq   & \overline{C}_6\left( e^{2 \tau [\widetilde{\phi}_{\mu_0}(\frac{r_0}{4}) - \widetilde{\phi}_{\mu_0} (\rho)]} \int_{B_{r_0}}  u^2  \right. \\ & \left.  +e^{2 \tau [\widetilde{\phi}_{\mu_0}(\frac{1}{2})- \widetilde{\phi}_{\mu_0} (\rho)]} \int_{B_1} u^2  \right).
\end{aligned}
\end{equation}
for every $ \tau \geq \overline{\tau}_1$, where $\overline{C}_6=\overline{C}_5+1$.

Now let us denote
\begin{equation}
\widetilde{\tau}= \frac{1}{2[\widetilde{\phi}_{\mu_0}(\frac{r_0}{4})-\widetilde{\phi}_{\mu_0}(\frac{1}{2})]} \log \left(\frac{\| u \|^2_{L^2(B_1)}}{\| u\|^2_{L^2(B_{r_0})}}\right)
\label{(4-14.1)}
\end{equation}
and notice that
\begin{equation}
e^{2 \widetilde{\tau} [\widetilde{\phi}_{\mu_0}(\frac{r_0}{4}) - \widetilde{\phi}_{\mu_0} (\rho)]} \| u\|^2_{L^2(B_{r_0})} = e^{2 \widetilde{\tau} [\widetilde{\phi}_{\mu_0}(\frac{1}{2})-\widetilde{\phi}_{\mu_0} (\rho)] }\| u \|^2_{L^2(B_1)}.
\label{(4-13.2)}
\end{equation}
If $\widetilde{\tau} \geq \overline{\tau}_1$, we choose $\tau= \widetilde{\tau}$ in \eqref{(5-13.1)} and we have
\begin{equation}
\begin{aligned}
\| u \|^{2}_{L^2(B_\rho)} \leq 2\overline{C}_6 \left(\| u \|_{L^2(B_{r_0})}\right)^{2\theta} \left(\| u\|_{L^2(B_1)}\right)^{2(1-\theta)} .
\end{aligned}
\label{(4-14.2)}
\end{equation}
If $\widetilde{\tau} \leq \overline{\tau}_1$, then by \eqref{(4-14.1)} we have trivially
\begin{equation}
\| u \|^2_{L^2(B_1)} < e^{2 \overline{\tau}_1 [\widetilde{\phi}_{\mu_0} (\frac{r_0}{4})-\widetilde{\phi}_{\mu_0} (\frac{1}{2})]} \| u\|^2_{L^2(B_{r_0})}.
\label{(4-15.1)}
\end{equation}
Hence we have
\vspace{3mm}
\begin{equation}
\begin{aligned}
\| u \|^2_{L^2(B_\rho)}\leq \| u \|^2_{L^2(B_1)}& =  (\| u \|_{L^2(B_1)})^{2\theta} (\| u \|_{L^2(B_1)})^{2(1-\theta)}\\& \leq e^{2 \overline{\tau}_1 [\widetilde{\phi}_{\mu_0}(\rho)-\widetilde{\phi}_{\mu_0}(\frac{1}{2})]} \left(\| u \|_{L^2(B_{r_0})}\right)^{2\theta} \left(\| u\|_{L^2(B_1)}\right)^{2(1-\theta)}.
\label{(4-16.2)}
\end{aligned}
\end{equation}
By \eqref{(4-14.2)} and \eqref{(4-16.2)} we have

\begin{equation}
\| u \|^2_{L^2(B_\rho)} \leq  \overline{C}_7\left(\| u \|_{L^2(B_{r_0})}\right)^{2\theta} \left(\| u\|_{L^2(B_1)}\right)^{2(1-\theta)}.
\label{sfere2}
\end{equation}
where $\overline{C}_7=\max\{2\overline{C}_6, e^{2\overline{\tau}_1 }\}$ and the proof is complete.
\end{proof}

\section*{Acknowledgement}
The author SV was partially supported by Gruppo Nazionale per l'Analisi Matematica, la Probabilit\`{a} e le loro Applicazioni (GNAMPA) of the Istituto Nazionale di Alta Matematica (INdAM).

\end{document}